\newtheorem{theorem}{Theorem}
\newtheorem{prop}{Proposition}
\title{Some Determinantal Identities }
\author{Milan Janji\'c }
\address{Department of Mathematics and Informatics\\
 University of Banja Luka \\
Republic of Srpska, BA}
\begin{document}
\maketitle

\begin{abstract}
Some applications of a result, which is proved in \cite{jan}, is considered.
We first prove three determinantal identities concerning the binomial coefficient and Stirling numbers of the first and the second kind. We  also easily  obtain the inverse of the Vandermonde matrix. Then we derive a recurrence formula for sums of powers, which is similar to the well-known Newton identity.  In the last section, we consider some sequences given by a homogenous linear recurrence equation.
  A determinantal identity for the Fibonacci numbers of higher  order is proved.
  We finish with an expression of the generalized Vandermonde determinant in terms of the standard Vandermonde determinant and elementary symmetric polynomials.
\end{abstract}
2000 Mathematics Subject Classification:  15A15

Keyword: generalized Vandermonde determinant, inverse Vandermonde matrix, sum of powers.

\section{Introduction}
We restate the result proved in \cite{jan}.

Let $n$ and $r$ be  positive integers. We consider the following
$n+r-1$ by $r$ matrix:
\begin{equation}\label{mpa}P=\begin{pmatrix}
p_{1,1}&p_{1,2}&\cdots&p_{1,r-1}&p_{1,r}\\
p_{2,1}&p_{2,2}&\cdots&p_{2,r-1}&p_{2,r}\\
\vdots&\vdots&\cdots&\vdots&\vdots\\
p_{n,1}&p_{n,2}&\cdots&p_{n,r-1}&p_{n,r}\\
-1&p_{n+1,2}&\cdots&p_{n+1,r-1}&p_{n+1,r}\\
0&-1&\cdots&p_{n+2,r-1}&p_{n+2,r}\\
\vdots&\vdots&\cdots&\vdots&\vdots\\
0&0&\cdots&p_{n+r-2,r-1}&p_{n+r-2,r}\\
0&0&\cdots&-1&p_{n+r-1,r}
\end{pmatrix}.\end{equation}

We connect matrix (\ref{mpa}) with a recursively given sequence of
vector-columns in the following way: Let $A=(A_1|A_2|\ldots|A_n)$ be a square matrix of
order $n.$  Here, $A_1,\ldots,A_n$ are columns of $A.$   We define a block matrix
$A_r=[A|A_{n+1}|\cdots|A_{n+r}]$ of $n$ rows and $n+r$ columns in the following way:
 \begin{equation}\label{r1} A_{n+j}=\sum_{i=1}^{n+j-1}p_{i,j}A_i,\;(j=1,2,\ldots,r).\end{equation}

For a  sequence  $1\leq j_1<j_2<\cdots<j_r<n+r$ of positive
integers, we
 let $M=M(\widehat{j_1},\widehat{j_2},\ldots,\widehat{j_r})$ denote  the  minor of $A_r$ of order $n,$  obtained  by deleting  columns  $j_1,j_2,\ldots,j_r$ of $A_r.$
 We shall also write $M(\widehat{j_1},\ldots,\widehat{j_r},A_{i_1},A_{i_2},\ldots,A_{i_k})$ if we want to stress that $M$ contains  $i_1,i_2,\ldots,i_k$ columns of $A_r.$

Note that the last column of $A_r$ cannot be deleted.

The sign ${\rm sgn}(M)$ of $M$  is defined as
      \[{\rm sgn}(M)=(-1)^{nr+j_1+j_2+\cdots+j_r+\frac{(r-1)r}{2}}.\]
We let  $Q=Q(j_1,\ldots,j_r)$ denote the submatrix of order $r,$
laying in $j_1,j_2,\ldots,j_r$ rows of $P.$

\begin{theorem}\label{th1} Let $1\leq j_1<\cdots<j_r<r+n$ be a sequence of positive integers.
Then,
\begin{equation}\label{ttt}M(\widehat{j_1},\ldots,\widehat{j_r})={\rm sgn}(M)\cdot \det Q\cdot \det A.\end{equation}
\end{theorem}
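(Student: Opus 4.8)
The plan is to expand the minor $M(\widehat{j_1},\ldots,\widehat{j_r})$ by repeatedly substituting the recurrence \eqref{r1}, and track how the coefficients assemble into $\det Q$. First I would observe that $M$ is an $n\times n$ minor of the $n\times(n+r)$ matrix $A_r$; the columns it retains are some subset of $\{A_1,\ldots,A_n,A_{n+1},\ldots,A_{n+r}\}$, and by the remark after the statement the column $A_{n+r}$ is always retained (the last column of $P$ has the entry $-1$ only below, and $j_r<n+r$). The idea is to induct on $r$, or equivalently to eliminate the retained "new" columns $A_{n+j}$ one at a time, starting from the one with the largest index.

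The key computational step is this: suppose $A_{n+j}$ is a retained column, with $j$ maximal among retained new columns. Using \eqref{r1}, write $A_{n+j}=\sum_{i=1}^{n+j-1}p_{i,j}A_i$ and expand the determinant $M$ by multilinearity in that column. Each resulting term is a minor in which $A_{n+j}$ has been replaced by some $A_i$ with $i<n+j$. Most such $i$ are already present as columns of $M$, so those terms vanish; the surviving terms come from the deleted indices $i\in\{j_1,\ldots,j_r\}$ together with $i\le n$ not otherwise used — and here the structure of $P$ (the staircase of $-1$'s and $0$'s in the bottom $r-1$ rows) forces exactly the right bookkeeping, so that the coefficient that survives is a $1\times1$, then $2\times2$, \dots, minor of $P$ built from the rows $j_1,\ldots,j_r$. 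Keeping careful track of the sign produced by moving the substituted column into the position vacated by a deleted column is what yields the prefactor ${\rm sgn}(M)=(-1)^{nr+\sum j_k+\binom r2}$.

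I would organize this as an induction on $r$. The base case $r=1$: here $P$ is an $n\times1$ column $(p_{1,1},\ldots,p_{n,1},-1)^{T}$, and $M(\widehat{j_1})$ for $1\le j_1\le n$ is obtained by deleting column $j_1$ from $[A\mid A_{n+1}]$ where $A_{n+1}=\sum_{i=1}^{n}p_{i,1}A_i$. Expanding $A_{n+1}$ by multilinearity and discarding the $n-1$ vanishing terms leaves exactly $p_{j_1,1}$ times the determinant of $A$ with column $j_1$ moved to the end, giving the sign $(-1)^{n-j_1}$; one checks this equals ${\rm sgn}(M)\cdot\det Q$ with $\det Q=p_{j_1,1}=q_{1,1}$. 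For the inductive step, eliminating the highest retained new column $A_{n+r}$ (using row $n+r-1$ of $P$, which has a $-1$ in the last position) reduces an instance with parameter $r$ to a sum of instances with parameter $r-1$ for the matrix $P$ with its last column removed; Laplace expansion of $\det Q$ along its last column matches this sum term by term.

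The main obstacle will be the sign bookkeeping: verifying that the cumulative effect of the column transpositions, the $-1$'s on the subdiagonal of $P$, and the alternating signs from each multilinear expansion combines into precisely $(-1)^{nr+j_1+\cdots+j_r+r(r-1)/2}$ rather than some off-by-a-sign variant. I would handle this by fixing, once and for all, a normal-form ordering of columns (all retained old columns in increasing order, then retained new columns in increasing order) and computing the permutation sign relative to that normal form at each stage, so that the induction hypothesis can be applied verbatim. The vanishing of the "wrong" terms in each multilinear expansion — those where a substituted $A_i$ coincides with a column already in $M$ — is immediate from repeated columns, and the fact that the staircase structure of $P$ leaves exactly the claimed sub-minor of $Q$ is the remaining routine check.
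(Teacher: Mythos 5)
This paper does not actually prove Theorem \ref{th1} --- it is restated from \cite{jan} --- so your proposal can only be measured against the argument in that reference, and your strategy is essentially that argument: induct on $r$, expand the always-retained column $A_{n+r}$ by multilinearity using (\ref{r1}) with $j=r$, observe that only the terms $i\in\{j_1,\ldots,j_r\}$ survive (every other term reproduces a retained column and vanishes), and match the resulting sum term by term with the Laplace expansion of $\det Q$ along its last column. The sign check you defer does close up: replacing $A_{n+r}$ by $A_{j_k}$ and restoring increasing column order costs $(-1)^{n-1+k-j_k}$, and combining this with the inductive sign $(-1)^{n(r-1)+\sum_{l\neq k}j_l+\frac{(r-2)(r-1)}{2}}$ and the cofactor sign $(-1)^{k+r}$ reproduces $(-1)^{nr+\sum_l j_l+\frac{(r-1)r}{2}}$ in every term (the exponents differ by $-2r-2j_k$).

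Two things need repair before this is a proof. A minor one: in row $n+r-1$ of $P$ the entry $-1$ sits in column $r-1$, not in the last column; the $-1$'s of $P$ are not used in the expansion step at all --- they enter only through $\det Q$, via those rows $j_k>n$ that index deleted \emph{new} columns. A substantive one: the induction hypothesis cannot be applied ``verbatim'' when $j_r=n+r-1$. In that case, for every $k<r$ the reduced deletion set still contains $n+r-1$, which is the \emph{last} column of $A_{r-1}$, and the hypothesis $j'_{r-1}<n+(r-1)$ of the theorem excludes precisely that. You must either prove a slightly stronger statement by induction, allowing the last column to be deleted (the formula stays consistent: row $n+r-1$ of $P$ restricted to its first $r-1$ columns is $(0,\ldots,0,-1)$, and expanding the corresponding $Q'$ along that row drops to an $(r-2)$-instance with matching sign), or dispose of the case $j_r=n+r-1$ by a separate argument. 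With that emendation, and with the ``routine check'' that the surviving coefficients are exactly the entries $p_{j_k,r}$ of the last column of $Q$ actually written out, the argument is complete.
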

In particular, for $r=1$ we have
\begin{theorem}\label{t1}
Let $A=(A_1,A_2,\ldots,A_n)$ be a matrix  of order $n,$ and let $p_1,p_2,\ldots,p_n$ be arbitrary elements of $F.$ If
\begin{equation}A_{n+1}=\sum_{i=1}^np_iA_i,\end{equation} then, for $j=1,2,\ldots,n,$ we have \[\det (A_1,\ldots,\widehat A_j,\ldots,A_{n+1})=(-1)^{n-j}p_j\det A.\]
\end{theorem}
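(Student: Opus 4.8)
The plan is to read off Theorem \ref{t1} as the case $r=1$ of Theorem \ref{th1}, so that no new machinery is needed. When $r=1$ the matrix $P$ in (\ref{mpa}) has $n+r-1=n$ rows and a single column, namely $(p_{1,1},\dots,p_{n,1})^{T}$; writing $p_i=p_{i,1}$, the defining relations (\ref{r1}) collapse to the single identity $A_{n+1}=\sum_{i=1}^{n}p_iA_i$, which is exactly the hypothesis of Theorem \ref{t1}. The augmented matrix is $A_1=[A\,|\,A_{n+1}]$, of order $n\times(n+1)$, and deleting one column amounts to choosing $j=j_1$ with $1\le j<n+1$, the last column $A_{n+1}$ never being deleted.

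With these identifications the three ingredients of (\ref{ttt}) are immediate. First, $M(\widehat{j})$ is by definition the order-$n$ minor of $A_1$ obtained by deleting column $j$, i.e. $\det(A_1,\dots,\widehat{A_j},\dots,A_n,A_{n+1})$. Second, $Q=Q(j)$ is the $1\times 1$ submatrix of $P$ sitting in row $j$, so $Q=(p_j)$ and $\det Q=p_j$. Third, the sign specializes to ${\rm sgn}(M)=(-1)^{n\cdot 1+j+\frac{(1-1)\cdot 1}{2}}=(-1)^{n+j}=(-1)^{n-j}$. Substituting all of this into (\ref{ttt}) yields $\det(A_1,\dots,\widehat{A_j},\dots,A_{n+1})=(-1)^{n-j}p_j\det A$, which is the assertion.

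I expect the only delicate point to be bookkeeping: checking that the column order used to define $M(\widehat{j})$ (indices increasing, $A_{n+1}$ last) matches the order in the statement, and that the exponent $nr+j_1+\cdots+j_r+\frac{(r-1)r}{2}$ really collapses to $n+j$ when $r=1$. Everything else is a direct substitution. As an independent sanity check one can also prove the formula from scratch: expanding $\det(A_1,\dots,\widehat{A_j},\dots,A_n,A_{n+1})$ by multilinearity in its last column via $A_{n+1}=\sum_{i}p_iA_i$, every term with $i\ne j$ has a repeated column and vanishes, leaving $p_j\det(A_1,\dots,\widehat{A_j},\dots,A_n,A_j)$; moving $A_j$ from the last slot back into position $j$ costs $n-j$ adjacent transpositions, contributing the factor $(-1)^{n-j}$. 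This reproduces the result and confirms the sign.
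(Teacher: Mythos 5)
Your proposal is correct and matches the paper exactly: the paper offers no separate argument for Theorem \ref{t1} beyond observing that it is the $r=1$ specialization of Theorem \ref{th1}, and your verification of the sign $(-1)^{n\cdot 1+j}=(-1)^{n-j}$ and of the collapse of (\ref{r1}) to the single relation $A_{n+1}=\sum_i p_iA_i$ supplies precisely the bookkeeping the paper leaves implicit. Your independent multilinearity check is a correct bonus but not a departure from the paper's route.
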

\section{Some Determinantal Identities}
We consider two function $f,g:\mathbb Z^+\times \mathbb Z^+\to \mathbb Z^+,$
such that $f(i,j)=0,$ if $i>j,$ and $g(i,j)=0,$ if $i<j.$
Let $f,g$ satisfy the  following recurrence:
\begin{equation}\label{rro}f(n+1,k+1)=\sum_{i=0}^ng(n,i)\cdot f(i,k).\end{equation}
We want to express $g$ in terms of $f.$

Define an upper-triangular matrix $A=(a_{ij})=(A_0,A_1,\ldots,A_n),$ of order $n+1,$
where $A_0,A_1,\ldots,A_n$ are the vector-columns of $A,$ such that
$a_{ij}=f(j,i),\;(i,j=0,\ldots,n).$

We next define $A_{n+1}=(a_{0,n+1},a_{1,n+1},\ldots,a_{n,n+1})^T,$ such that

\[a_{i,n+1}=\sum_{t=0}^{n}g(n,t)a_{it},\;(i=0,1,\ldots,n).\]
It follows that
\[a_{i,n+1}=\sum_{t=0}^{n}g(n,t)f(t,i)=f(n+1,i+1),\;(i=0,1,\ldots,n).\]
Hence,
\[A_{n+1}=(f(n+1,1),f(n+1,2),\ldots,f(n+1,n+1))^T.\]
We thus obtain
\[A_{n+1}=\sum_{j=0}^ng(n,j)A_j.\]

We conclude that matrix $A$ satisfies the conditions of Theorem \ref{t1}.
  Taking into account that $A$ is an upper-triangular matrix, we obtain the following identity:
\[\det (A_0,\ldots,\widehat A_j,\ldots,A_{n+1})=(-1)^{n-j+1}g(n,j)
\prod_{i=0}^na_{ii},\;(j=0,1,\ldots,n).\]

 Note that the matrix $(A_0,\ldots,\widehat A_j,\ldots,A_{n+1})$ is a quasi-diagonal block matrix
of the form \[(A_0,\ldots,\widehat A_j,\ldots,A_{n+1})=\text{diag}(A_{11},A_{22}).\]
The matrix $A_{11}$ is an upper-triangular matrix, which determinant equals $\prod_{k=0}^{j-2}f(j,j).$ The matrix $A_{22}$ is an upper-Hesenberg matrix of order $n-j+2,$ which has the  form:\[A_{22}=\begin{pmatrix}f(j,j-1)&f(j+1,j-1)&\cdots&f(n,j-1)&f(n+1,j)\\
f(j,j)&f(j+1,j)&\cdots&f(n,j)&f(n+j,j+1)\\
0&f(j+1,j+1)&\cdots&f(n,j+1)&f(n+1,j+2)\\
\vdots&\vdots&\cdots&\vdots&\vdots\\
0&0&\cdots&f(n,n)&f(n+1,n+1)
\end{pmatrix}.\]
Assuming that $f(i,i)\not=0$ we obtain
\begin{equation}\label{id}g(n,j-1)=
(-1)^{n-j+1}\prod_{t=j-1}^nf(t,t)\det A_{22}.\end{equation}
The following are the well-known recurrences for the binomial coefficients, Stirling numbers of the first, and Stirling numbers of the second kind:
\[{n+1\choose k+1}=\sum_{i=k}^{n}{i\choose k},\;\left[n+1\atop k+1\right]=\sum_{i=0}^{n}(-1)^{n-i}\frac{n!}{i!}\left[i\atop k\right],\;\left\{n+1\atop
k+1\right\}=\sum_{i=0}^{n}{n\choose i}\left\{i\atop k\right\}.\]
In the view of (\ref{id}) we have 
\begin{prop}
\begin{enumerate}
\item
The binomial coefficients satisfy the following identity:
\[1=(-1)^{n-j+1}\begin{vmatrix}{j\choose j-1}&{j+1\choose j-1}&\cdots&{n-1\choose j-1}&{n\choose j}\\
{j\choose j}&{j+1\choose j}&\cdots&{n-1\choose j}&{n\choose j+1}\\
\vdots&\vdots&\cdots&\vdots&\vdots\\
0&0&\cdots&{n-1\choose n-1}&{n\choose n}
\end{vmatrix} .\]
\item For the Stirling numbers of the first kind we have
\[\frac{n!}{(j-1)!}=\begin{vmatrix}\left[j\atop j-1\right]&\left[j+1\atop j-1\right]&\cdots&\left[n-1\atop j-1\right]&\left[n\atop j\right]\\
\left[j\atop j\right]&\left[j+1\atop j\right]&\cdots&\left[n-1\atop j\right]&\left[n\atop j+1\right]\\
\vdots&\vdots&\cdots&\vdots\\
0&0&\cdots&\left[n-1\atop n-1\right]&\left[n\atop n\right]
\end{vmatrix}.\]
\item For the Stirling numbers of the second kind we have
\[{n\choose j-1}=(-1)^{n-j+1}\begin{vmatrix}\left\{j\atop j-1\right\}&\left\{j+1\atop j-1\right\}&\cdots&\left\{n-1\atop j-1\right\}&\left\{n\atop j\right\}\\
\left\{j\atop j\right\}&\left\{j+1\atop j\right\}&\cdots&\left\{n-1\atop j\right\}&\left\{n\atop j+1\right\}\\
\vdots&\vdots&\cdots&\vdots\\
0&0&\cdots&\left\{n-1\atop n-1\right\}&\left\{n\atop n\right\}
\end{vmatrix}.\]
\end{enumerate}
\end{prop}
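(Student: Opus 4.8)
The plan is to obtain each of the three identities as a direct specialization of formula~(\ref{id}). For every one of the three classical recurrences displayed above I would read off the pair of functions $(f,g)$ that turns it into an instance of the template~(\ref{rro}), check that the matrix $A$ built from $f$ in the paragraph preceding~(\ref{id}) is the relevant triangular matrix, and then simply substitute into~(\ref{id}).

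The three choices are as follows. For the binomial coefficients, take $f(i,k)={i\choose k}$ and $g(n,i)=1$; the recurrence ${n+1\choose k+1}=\sum_{i=k}^{n}{i\choose k}$ is~(\ref{rro}) once one extends the summation from $i=k,\ldots,n$ down to $i=0,\ldots,n$, which is harmless since ${i\choose k}=0$ for $i<k$. For the Stirling numbers of the first kind, take $f(i,k)=\left[i\atop k\right]$ and $g(n,i)=(-1)^{n-i}\frac{n!}{i!}$. For the Stirling numbers of the second kind, take $f(i,k)=\left\{i\atop k\right\}$ and $g(n,i)={n\choose i}$. In the last two cases the displayed recurrence is already exactly of the form~(\ref{rro}).

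The decisive point is that in all three cases the diagonal values of $f$ are all equal to $1$, because ${t\choose t}=\left[t\atop t\right]=\left\{t\atop t\right\}=1$. Consequently the factor $\prod_{t=j-1}^{n}f(t,t)$ occurring in~(\ref{id}) is $1$, and~(\ref{id}) collapses to the statement that $g(n,j-1)$ equals $\pm\det A_{22}$, where $A_{22}$ is the upper Hessenberg matrix obtained from its general shape by replacing every entry $f(a,b)$ with the corresponding combinatorial number. Feeding in the data above then produces the three identities: the binomial case has left-hand side $g(n,j-1)=1$; the first Stirling case has $g(n,j-1)=(-1)^{n-j+1}\frac{n!}{(j-1)!}$, whose sign is absorbed into the prefactor, leaving a sign-free identity; and the second Stirling case has $g(n,j-1)={n\choose j-1}$.

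The work that remains is purely bookkeeping, and that is where I expect the only difficulties: one has to check that the Hessenberg pattern of the generic $A_{22}$ --- the single sub-diagonal entry in the first column, the zeros strictly below the first sub-diagonal in every later column, and the ``shifted'' last column --- is reproduced verbatim by each of the three displayed determinants, and one has to keep the exponent of $-1$ coming out of~(\ref{id}) straight, combining it in the first Stirling case with the $(-1)^{n-i}$ already present in $g(n,i)$. Once the indexing conventions are pinned down, each identity drops out of~(\ref{id}) with no further computation.
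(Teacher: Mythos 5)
Your proposal is correct and follows the paper's own route exactly: the paper obtains all three identities by substituting the three classical recurrences into (\ref{id}) with precisely the choices of $f$ and $g$ you list, and the observation that every diagonal value $f(t,t)$ equals $1$, so that the product in (\ref{id}) disappears, is the same simplification the paper relies on. The index and sign bookkeeping you defer to the end is indeed the only delicate point, and it is one the paper itself treats loosely.
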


\section{Inverse of Vandermonde Matrix}
Using Theorem \ref{t1}, we easily derive the inverse of a Vandermondove matrix.
 We let $e_k=e_k(x_1,x_2,\ldots,x_n),\;(k=0,1,2,\ldots,n)$  denote the elementary symmetric polynomials of $x_1,x_2,\ldots,x_{n}.$ We use the following notation: $e_k(\widehat{x_i})=e_k(x_1,\ldots,x_{i-1},x_{i+1},\ldots,x_n).$
As usual, by $V=V(x_1,\ldots,x_{n})$ will be denoted the Vandermonde matrix of order $n.$

\begin{prop}\label{p1} If $V=V(x_1,\ldots,x_n)$ is the Vandermonde matrix, and $M_{ij}$ is the minor of order $n-1,$ obtained by deleting the row $i$ and the column $j$ of $V,$ then
 \begin{equation}\label{min}M_{ij}=e_{n-j}(\widehat{x_i})\det V(\widehat{x_i}).\end{equation}
\end{prop}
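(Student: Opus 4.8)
The plan is to apply Theorem~\ref{t1} to the Vandermonde matrix directly, exploiting the fact that adding an extra node $x_{n+1}=t$ produces a column that is a linear combination of the existing columns governed by elementary symmetric polynomials. Concretely, let $V=V(x_1,\dots,x_n)$ have $(i,k)$-entry $x_i^{k-1}$ for $i=1,\dots,n$ and $k=1,\dots,n$, so the columns are $A_1=(1,\dots,1)^T$, $A_2=(x_1,\dots,x_n)^T$, up to $A_n=(x_1^{n-1},\dots,x_n^{n-1})^T$. The key algebraic input is the identity $x_i^n=\sum_{k=1}^{n}(-1)^{n-k}e_{n-k+1}x_i^{k-1}$, valid for each $i$ because $x_i$ is a root of $\prod_{j=1}^n(X-x_j)=\sum_{k=0}^n(-1)^{n-k}e_{n-k}X^k$ (here $e_0=1$). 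In column form this reads
\[
A_{n+1}:=(x_1^n,\dots,x_n^n)^T=\sum_{k=1}^{n}(-1)^{n-k}e_{n-k+1}A_k,
\]
so $V$ satisfies the hypothesis of Theorem~\ref{t1} with $p_k=(-1)^{n-k}e_{n-k+1}$.

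Next I would identify the minor $M_{ij}$ with a determinant of the form appearing in Theorem~\ref{t1}. Deleting row $i$ and column $j$ of $V$ leaves the matrix whose columns are $A_1,\dots,\widehat{A_j},\dots,A_n$ with the $i$-th coordinate removed from each. The trick is that removing the $i$-th coordinate from $A_1,\dots,A_n$ and appending $A_{n+1}$ (also with its $i$-th coordinate removed) reconstructs exactly the Vandermonde data on the node set $\{x_1,\dots,x_n\}\setminus\{x_i\}$ together with one power column: indeed the $n-1$ remaining rows, indexed by $\ell\neq i$, carry entries $x_\ell^{0},x_\ell^{1},\dots$, so the matrix $(A_1,\dots,\widehat{A_j},\dots,A_{n+1})$ with row $i$ deleted has columns $1,x_\ell,\dots,\widehat{x_\ell^{\,j-1}},\dots,x_\ell^{n}$. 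Applying Theorem~\ref{t1} to the order-$(n)$ matrix $(A_1,\dots,A_n)$... — more carefully, I would apply it to the $n\times n$ system above to get
\[
\det(A_1,\dots,\widehat{A_j},\dots,A_{n+1})=(-1)^{n-j}p_j\det V=(-1)^{n-j}(-1)^{n-j}e_{n-j+1}\det V=e_{n-j+1}\det V,
\]
but this is the full-size version; the actual $M_{ij}$ is obtained by first deleting row $i$, which reduces everything to the same statement one size down on the node set $\widehat{x_i}$.

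So the cleaner route: fix $i$ and delete row $i$ \emph{first}. The resulting $(n-1)\times n$ array has columns $B_1,\dots,B_n$ where $B_k=(x_\ell^{k-1})_{\ell\neq i}$; these are the first $n-1$ Vandermonde columns on $\widehat{x_i}$ plus one extra column $B_n$. Since each $x_\ell$ with $\ell\neq i$ is a root of $\prod_{m\neq i}(X-x_m)=\sum_{k=0}^{n-1}(-1)^{n-1-k}e_{n-1-k}(\widehat{x_i})X^k$, we get $B_n=\sum_{k=1}^{n-1}(-1)^{n-1-k}e_{n-k}(\widehat{x_i})\,B_k$. Now Theorem~\ref{t1} applies to the $(n-1)\times(n-1)$ matrix $(B_1,\dots,B_{n-1})=V(\widehat{x_i})$ with $p_k=(-1)^{n-1-k}e_{n-k}(\widehat{x_i})$, giving for $j=1,\dots,n-1$
\[
M_{ij}=\det(B_1,\dots,\widehat{B_j},\dots,B_n)=(-1)^{n-1-j}p_j\det V(\widehat{x_i})=e_{n-j}(\widehat{x_i})\det V(\widehat{x_i}),
\]
since $(-1)^{n-1-j}(-1)^{n-1-j}=1$. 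This is exactly \eqref{min}.

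The main obstacle is bookkeeping, not mathematics: one must be scrupulous about (a) the indexing convention for $M_{ij}$ — column $j$ of $V$ carries the exponent $j-1$, so the elementary symmetric polynomial that survives is $e_{n-j}$, not $e_{n-j+1}$; (b) the sign, which works out because the coefficient $p_j$ from the characteristic polynomial of $\widehat{x_i}$ already carries a factor $(-1)^{n-1-j}$ that cancels the $(-1)^{n-1-j}$ from Theorem~\ref{t1}; and (c) confirming that $e_{n-1}(\widehat{x_i})$ is the right top coefficient in the degree-$(n-1)$ case $j=1$ and that the formula degenerates correctly. I would double-check the edge cases $j=1$ and $j=n-1$ against a small example such as $n=3$ to make sure the off-by-one in the subscript of $e$ is correctly pinned down before writing the final version.
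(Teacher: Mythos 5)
Your proposal is correct and follows essentially the same route as the paper: delete row $i$ to get an $(n-1)\times n$ array, use the polynomial $\prod_{m\neq i}(X-x_m)$ to write the last column as a combination of the columns of $V(\widehat{x_i})$ with coefficients $(-1)^{n-1-k}e_{n-k}(\widehat{x_i})$, and apply Theorem~\ref{t1}. The sign and index bookkeeping in your final display is right (and the case $j=n$, which Theorem~\ref{t1} does not cover, is the trivial identity $M_{in}=\det V(\widehat{x_i})$).
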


\begin{proof} For a fixed $i$ the minors $M_{ij},\;(j=1,2,\ldots,n)$  are the minors of order $n-1$  of the
matrix
\[M_i=  \begin{pmatrix}1&x_1&\cdots&x^{n-1}_1\\1&x_2&\cdots&x^{n-1}_2\\\vdots&\vdots&\cdots&\vdots\\
\widehat{1}&\widehat{x_i}&\cdots&\widehat{x^{n-1}_i}\\\vdots&\vdots&\cdots&\vdots\\
1&x_n&\cdots&x^{n-1}_{n}\end{pmatrix}.\]
Define the polynomial
\[f_i(x)=(x-x_1)(x-x_2)\cdots\widehat{(x-x_i)}\cdots(x-x_{n}),\;(i=1,2,\ldots,n).\]
Expanding the product on the right side of this equation one obtains
\[f_i(x)=x^{n-1}-e_1(\widehat{x_i})x^{n-2}+\cdots+(-1)^{n-1}e_{n-1}(\widehat{x_i}).
\]
It follows that \[x_k^{n-1}=e_1(\widehat{x_i})x_k^{n-2}-\cdots+(-1)^{n}e_{n-1}(\widehat{x_i}),\;(k\not=i).
\]
We conclude that, for the last column $C_{n}$ of $M_i$ we have
\[C_{n}=\sum_{j=1}^{n}(-1)^{n-j+1}e_{n-j}(\widehat{x_i})C_{j}.\]

Applying Theorem \ref{t1} we obtain (\ref{min}).

It is well-known that the following holds
\[\det V=\prod_{1\leq i<j\leq n}(x_j-x_i).\]
It implies that $V$ is invertible if and only if $x_i\not=x_j,\;(i\not=j).$
Suppose that $x_i\not=x_j,\;(i\not=j),$ and denote by $W=(w_{ij})=V^{-1}.$
\end{proof}
\begin{prop}\label{p2}
For each $i,j=1,2,\ldots, n$ we have
\[w_{ij}=\frac{(-1)^{n-i}e_{n-i}(\widehat{x_j})}{\prod_{k\not=j}(x_j-x_k)}.\]
\end{prop}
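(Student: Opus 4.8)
The plan is to combine the classical cofactor formula for the inverse with the minor evaluation already established in Proposition~\ref{p1}. Recall that if $W=(w_{ij})=V^{-1}$, then by Cramer's rule $w_{ij}=\dfrac{(-1)^{i+j}M_{ji}}{\det V}$, where $M_{ji}$ is the minor of order $n-1$ obtained by deleting row $j$ and column $i$ of $V$. So the whole computation reduces to substituting the value of $M_{ji}$ given by~(\ref{min}) and simplifying the resulting quotient of products.

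Concretely, I would proceed as follows. First, write $w_{ij}=\dfrac{(-1)^{i+j}M_{ji}}{\det V}$. Next, apply Proposition~\ref{p1} with the roles of the indices swapped (delete row $j$, column $i$), giving $M_{ji}=e_{n-i}(\widehat{x_j})\det V(\widehat{x_j})$. Then substitute this in, so that $w_{ij}=(-1)^{i+j}e_{n-i}(\widehat{x_j})\dfrac{\det V(\widehat{x_j})}{\det V}$. The last step is to evaluate the ratio $\dfrac{\det V(\widehat{x_j})}{\det V}$ using the product formula $\det V=\prod_{1\le k<\ell\le n}(x_\ell-x_k)$: the numerator is the same product with every factor involving $x_j$ deleted, so the ratio is $1/\prod_{k<j}(x_j-x_k)\cdot\prod_{j<\ell}(x_\ell-x_j)^{-1}$; collecting signs, $\prod_{j<\ell}(x_\ell-x_j)=(-1)^{n-j}\prod_{j<\ell}(x_j-x_\ell)$, so the ratio equals $\dfrac{(-1)^{n-j}}{\prod_{k\neq j}(x_j-x_k)}$.

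Putting the pieces together, $w_{ij}=(-1)^{i+j}(-1)^{n-j}\dfrac{e_{n-i}(\widehat{x_j})}{\prod_{k\neq j}(x_j-x_k)}$, and since $(-1)^{i+j}(-1)^{n-j}=(-1)^{n+i}=(-1)^{n-i}$, this is exactly the claimed formula. The only genuinely delicate point is the sign bookkeeping: one must be careful that the cofactor sign $(-1)^{i+j}$, the sign $(-1)^{n-j}$ coming from reversing $n-j$ difference factors in the Vandermonde ratio, and the $(-1)^{n-i}$ in the target statement are reconciled correctly, and also that the index transposition ($M_{ji}$ versus $M_{ij}$) is tracked consistently when invoking Proposition~\ref{p1}. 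Everything else is a routine manipulation of the product formula for the Vandermonde determinant.
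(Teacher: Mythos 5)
Your proof is correct and follows exactly the paper's own argument: the cofactor formula $w_{ij}=(-1)^{i+j}M_{ji}/\det V$, the substitution of $M_{ji}=e_{n-i}(\widehat{x_j})\det V(\widehat{x_j})$ from Proposition~\ref{p1}, and the evaluation $\det V(\widehat{x_j})/\det V=(-1)^{n-j}/\prod_{k\neq j}(x_j-x_k)$. The sign bookkeeping is handled correctly, and in fact you justify the Vandermonde ratio more explicitly than the paper does.
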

\begin{proof}
We have
\[w_{ij}=\frac{(-1)^{i+j}M_{ji}}{\det V}=\frac{(-1)^{i+j}e_{n-i}(\widehat{x_j})
\det V(\widehat{x_j})}{\det V}.\]
Since
\[\frac{\det V(\widehat{x_j})}{\det V}=\frac{(-1)^{n-j}}{\prod_{k\not=j}(x_j-x_k)},\]
we finally have
\[w_{ij}=\frac{(-1)^{n-i}e_{n-i}(\widehat{x_j})}{\prod_{k\not=j}(x_j-x_k)},\;(i,j=1,2,\ldots,n),\]
and the theorem is proved.
\end{proof}

\section{Sums of Powers}

For  a nonnegative integer $k,$  we denote $s_k=x_1^k+\cdots+x_n^k.$
\begin{prop} For a nonnegative integer $k$, the following recurrence holds
\begin{equation}\label{ss}s_{n+k}=\sum_{j=1}^{n}(-1)^{n+j}s_{k+j-1}e_{n-j+1}.\end{equation}
\end{prop}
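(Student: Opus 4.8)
The plan is to apply Theorem~\ref{t1} to a suitable Vandermonde-type matrix, exactly in the spirit of the previous two sections. First I would fix the $n+k+1$ powers $x_1^{k}, x_1^{k+1}, \ldots, x_1^{k+n}$ (and likewise for each $x_\ell$) and recall the classical Newton-type relation coming from the fact that each $x_\ell$ is a root of the polynomial $\prod_{i=1}^n (x-x_i) = x^n - e_1 x^{n-1} + \cdots + (-1)^n e_n$. Multiplying this identity by $x_\ell^{k}$ gives $x_\ell^{n+k} = \sum_{j=1}^{n} (-1)^{j+1} e_j\, x_\ell^{n+k-j}$, valid for every $\ell = 1,\ldots,n$. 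Reindexing $j \mapsto n-j+1$ turns the exponent $n+k-j$ into $k+j-1$, so that $x_\ell^{n+k} = \sum_{j=1}^{n} (-1)^{n-j} e_{n-j+1}\, x_\ell^{k+j-1}$.

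Next I would package this as a column-dependence relation. Form the $n \times n$ matrix $A = (A_1, A_2, \ldots, A_n)$ whose $j$-th column is $A_j = (x_1^{k+j-1}, x_2^{k+j-1}, \ldots, x_n^{k+j-1})^T$, i.e. a Vandermonde matrix with every row scaled by $x_\ell^{k}$; and let $A_{n+1} = (x_1^{n+k}, x_2^{n+k}, \ldots, x_n^{n+k})^T$. The relation of the preceding paragraph says precisely $A_{n+1} = \sum_{j=1}^{n} p_j A_j$ with $p_j = (-1)^{n-j} e_{n-j+1}$. Theorem~\ref{t1} then yields $\det(A_1,\ldots,\widehat{A_j},\ldots,A_{n+1}) = (-1)^{n-j} p_j \det A = e_{n-j+1} \det A$ for each $j$.

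The final step is to read off the sum-of-powers identity. Sum the relation $x_\ell^{n+k} = \sum_{j=1}^{n} (-1)^{n-j} e_{n-j+1}\, x_\ell^{k+j-1}$ over $\ell = 1, \ldots, n$; the left side becomes $s_{n+k}$ and the right side becomes $\sum_{j=1}^{n} (-1)^{n-j} e_{n-j+1}\, s_{k+j-1}$. Since $(-1)^{n-j} = (-1)^{n+j}$, this is exactly \eqref{ss}.

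Honestly, there is no real obstacle here: the content is the classical observation that each $x_\ell$ annihilates its own defining polynomial, and the only thing that needs care is the bookkeeping of the index substitution $j \mapsto n-j+1$ and the resulting sign $(-1)^{n-j} = (-1)^{n+j}$, so that the statement matches \eqref{ss} verbatim. Invoking Theorem~\ref{t1} is not strictly necessary for this particular identity — summing the scalar relations suffices — but I would mention it because it explains why the recurrence "looks like" Newton's identity and ties this proposition to the determinantal framework of the paper; one could alternatively present \eqref{ss} as the trace of the matrix identity $A_{n+1} = \sum_j p_j A_j$.
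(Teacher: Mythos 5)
Your argument is correct, but it is genuinely different from the paper's. The paper forms the $(n+1)\times(n+1)$ matrix whose first $n$ rows are the Vandermonde rows $(1,x_i,\dots,x_i^{n})$ and whose last row is $(s_k,s_{k+1},\dots,s_{k+n})$; since the last row equals $\sum_i x_i^k R_i$, the determinant vanishes, and expanding along that last row and evaluating each cofactor $M_{n+1,j}$ by Theorem~\ref{t1} (using the relation $x_m^{n}=\sum_{j}(-1)^{n-j+1}e_{n-j}x_m^{j}$ among the \emph{columns}) gives $M_{n+1,j}=e_{n-j+1}\det V$ and hence the identity, under the hypothesis $\det V\neq 0$. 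You instead multiply the relation $\prod_i(x-x_i)=0$ at $x=x_\ell$ by $x_\ell^{k}$ and simply sum over $\ell$; this is shorter, requires no determinant at all, and — worth noting — does not need the nondegeneracy condition $\det V(x_1,\dots,x_n)\neq 0$ that the paper's proof imposes (the paper's identity is of course polynomial in the $x_i$, so the condition can be removed afterward, but your route avoids it outright). Your index bookkeeping $j\mapsto n-j+1$ and the sign $(-1)^{n-j}=(-1)^{n+j}$ check out, so the statement is reproduced verbatim. Two small remarks: the invocation of Theorem~\ref{t1} in your second paragraph is, as you say, decorative rather than load-bearing here, whereas in the paper it does the actual work of computing the minors; and calling the final summation ``the trace of the matrix identity $A_{n+1}=\sum_j p_jA_j$'' is a slight misnomer — you are summing the components of column vectors, i.e.\ multiplying the identity on the left by the all-ones row vector, not taking a trace.
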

\begin{proof}
Consider the following matrix of order $n+1.$
\[A=\begin{pmatrix}1&x_1&\cdots&x_1^{n}\\\vdots&\vdots&\vdots&\vdots\\
1&x_{i}&\cdots&x_{i}^{n}\\
\vdots&\vdots&\vdots&\vdots\\
1&x_n&\cdots&x_n^{n}\\
s_k&s_{k+1}&\cdots&s_{k+n}\\
\end{pmatrix}.\]
If we let $R_1,\ldots,R_n,R_{n+1}$ denote the rows of $A,$ then
\[R_{n+1}=\sum_{j=1}^{n}x_i^kR_i,\]
which means that the last row of $A$ is a linear combination of the remaining rows.
We conclude that $\det A=0.$
On the other hand, expanding $\det A$ across the last row, we obtain
\[\det A=\sum_{j=1}^{n+1}(-1)^{n+1+j}s_{m+j-1}M_{n+1,j},\]
where
\[M_{n+1,j}=\det\begin{pmatrix}1&x_1&\cdots&\widehat{x_1^{j-1}}&\cdots&x_1^n\\
\vdots&\vdots&\cdots&\vdots&\cdots&\vdots\\
1&x_n&\cdots&\widehat{x_n^{j-1}}&\cdots&x_n^n
\end{pmatrix}.\]
Since
\begin{equation}\label{eovi}x_m^{n}=\sum_{j=0}^{n-1}(-1)^{n-j+1}e_{n-j}x_m^j,\;(m=1,2,\ldots,n),\end{equation}
where $e_i=e_i(x_1,\ldots,x_n),$ we may apply Theorem \ref{t1}.

It follows that  $M_{i,j}=e_{n-j+1}\det V(x_1,\ldots,x_n),$
and the assertion is proved, under the conditions $\det V(x_1,x_2,\ldots,x_n)\not=0.$
\end{proof}
\section{Homogenous Linear Recurrence}
We consider the case when $p_{ij}=0,\;(j>i),$ in (\ref{mpa}). Then,
\begin{equation}\label{mpah}P=\begin{pmatrix}
p_{1,1}&0&\cdots&0&0\\
p_{2,1}&p_{2,2}&\cdots&0&0\\
\vdots&\vdots&\cdots&\vdots&\vdots\\
p_{n,1}&p_{n,2}&\cdots&p_{n,r-1}&p_{n,r}\\
-1&p_{n+1,2}&\cdots&p_{n+1,r-1}&p_{n+1,r}\\
0&-1&\cdots&p_{n+2,r-1}&p_{n+2,r}\\
\vdots&\vdots&\cdots&\vdots&\vdots\\
0&0&\cdots&p_{n+r-2,r-1}&p_{n+r-2,r}\\
0&0&\cdots&-1&p_{n+r-1,r}
\end{pmatrix}.\end{equation}
In this case, we have
\begin{equation}\label{r1h} A_{n+j}=\sum_{i=j}^{n+j-1}p_{i,j}A_i,\;(j=1,2,\ldots,r),\end{equation}
which is the homogenous recurrence equation of order $n,$ with the initial conditions $(A_1,A_2,\ldots,A_n).$ If, additionally, in (\ref{mpah}), all $p_{i,j}=1,$ then we have the recurrence for Fibonacci $n$-step numbers. We let $F^{(n,i)}_k,\;(i=1,2,\ldots,n,\;k=1,2,\ldots)$ denote Fibonacci $n$-step numbers, which the initial conditions are given by the $i$th row of $A.$ Taking $r=n,\;j_i=i,\;(i=1,2,\ldots,n),$ as a consequence of Theorem \ref{th1} we obtain
\begin{prop} The following identity is true:
\[\begin{vmatrix}F^{n,1}_{n+1}&F^{n,1}_{n+2}&\cdots&F^{n,1}_{2n}\\
\vdots&\vdots&\cdots&\vdots\\
F^{n,i}_{n+1}&F^{n,i}_{n+2}&\cdots&F^{n,i}_{2n}\\
\vdots&\vdots&\cdots&\vdots\\
F^{n,n}_{n+1}&F^{n,n}_{n+2}&\cdots&F^{n,n}_{2n}
\end{vmatrix}
=\det A.\]
\end{prop}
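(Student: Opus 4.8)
The plan is to recognize the displayed determinant as a particular minor of the block matrix $A_r$ attached to the recurrence (\ref{r1h}), and then to invoke Theorem \ref{th1} directly. First I would specialize the construction of this section: take $r=n$ and all nonzero $p_{i,j}$ equal to $1$ (with $p_{i,j}=0$ for $j>i$), so that $A_r$ has columns $A_1,\ldots,A_n,A_{n+1},\ldots,A_{2n}$, where $A_{n+j}=A_j+A_{j+1}+\cdots+A_{n+j-1}$ is the Fibonacci $n$-step recurrence applied columnwise. Since the $i$th row of $A$ provides the initial values of $F^{(n,i)}$, an immediate induction on $k$ shows that the $i$th entry of $A_k$ equals $F^{(n,i)}_k$ for every $k\ge 1$. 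In particular, the matrix obtained from $A_r$ by deleting its first $n$ columns is exactly $(A_{n+1}|A_{n+2}|\cdots|A_{2n})$, whose $(i,j)$ entry is $F^{(n,i)}_{n+j}$; this is precisely the matrix whose determinant appears in the statement.

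Next I would apply Theorem \ref{th1} with the choice $j_i=i$ for $i=1,\ldots,n$. This is legitimate: $j_n=n<2n=r+n$, the indices are strictly increasing, and the last column $A_{2n}$ is not deleted. The theorem then gives
\[\det(A_{n+1}|A_{n+2}|\cdots|A_{2n})={\rm sgn}(M)\cdot\det Q\cdot\det A,\]
so it remains to evaluate ${\rm sgn}(M)$ and the $r\times r$ matrix $Q$. For the sign, with $r=n$ and $j_1+\cdots+j_r=\frac{n(n+1)}{2}$ the exponent is
\[nr+(j_1+\cdots+j_r)+\frac{(r-1)r}{2}=n^2+\frac{n(n+1)}{2}+\frac{n(n-1)}{2}=2n^2,\]
so ${\rm sgn}(M)=1$. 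For $Q$: it consists of rows $1,\ldots,n$ of $P$, and since the entries equal to $-1$ in $P$ occur only in rows $n+1,\ldots,n+r-1$, this submatrix is $(p_{i,j})_{1\le i,j\le n}$, which under our specialization is lower triangular with every entry on and below the diagonal equal to $1$. Hence $\det Q=1$, and we conclude $M=\det A$, which is the asserted identity.

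There is no genuine difficulty here; the argument is essentially an instantiation of Theorem \ref{th1}. The only point that needs care is the bookkeeping: one must check that the index shift between ``columns of $A_r$'' and ``Fibonacci indices'' is carried out correctly, and that the sign exponent is computed correctly modulo $2$. Both checks reduce to the short arithmetic indicated above once the data $r=n$, $j_i=i$, and $p_{i,j}=1$ have been fixed.
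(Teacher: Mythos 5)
Your proposal is correct and is essentially the paper's own argument: the paper likewise obtains this identity by specializing Theorem \ref{th1} to $r=n$, $j_i=i$, and all $p_{i,j}=1$ for $i\ge j$, though it leaves the sign computation and the evaluation $\det Q=1$ implicit. Your bookkeeping (exponent $2n^2$, lower-triangular $Q$ with unit diagonal, identification of the deleted-column minor with the Fibonacci $n$-step matrix) is accurate and simply fills in the details the paper omits.
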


At the end we derive an explicit formula for the Generalized Vandermonde matrices.

For the sequence $0<k_1<\cdots<k_n$ of integers, the  matrix   \begin{equation}\label{vg}V(k_1,\ldots,k_n)=        \begin{pmatrix}
          x_1^{k_1-1} & x_1^{k_2-1} & x_1^{k_3-1} &\cdots&x_1^{k_n-1} \\
          x_2^{k_1-1} & x_2^{k_2-1} & x_2^{k_3-1} &\cdots&x_2^{k_n-1} \\
          \vdots &\vdots & \vdots &\vdots \\
           x_n^{k_1-1} & x_n^{k_2-1} & x_n^{k_3-1} &\cdots&x_n^{k_n-1} \\
        \end{pmatrix}
      \end{equation} is called the generalized Vandermonde matrix.

For each $k\geq 1,$ according to (\ref{eovi}), we have
\begin{equation}\label{jjj3}x_i^{n+k}=\sum_{j=1}^{n}(-1)^{n-j}e_{n-j+1}x_i^{k+j-1},
\;(i=1,2,\ldots,n).\end{equation}
We denote $\sigma_j=(-1)^{n-j}e_{n-j+1},\;(j=1,\ldots,n).$
Consider the following matrix
\begin{equation}\label{ma}\overline V=
        \begin{pmatrix}
          1 & x_1 & x_1^2 &\cdots&x_1^{n-1}& x_1^n&\cdots&x_1^{k_n-1}\\
         1 & x_2 & x_2^2 &\cdots&x_2^{n-1}& x_2^n&\cdots&x_2^{k_n-1}\\
          \vdots &\vdots & \vdots &\vdots &\vdots&\vdots\\
          1 & x_n & x_n^2 &\cdots&x_n^{n-1}& x_n^n&\cdots&x_n^{k_n-1}
        \end{pmatrix}
      .\end{equation}

We have
\begin{equation}\label{jjj3}
V_{n+k}=\sum_{j=1}^{n}(-1)^{n-j}e_{n-j+1}V_{k+j-1},\;(k=1,2,\ldots,k_n-n).\end{equation}

In the  view of (\ref{jjj3}), the corresponding  matrix  $P$ in (\ref{mpah}) is the $k_n$ by $k_n+1-n$ matrix of the form
\[P=\begin{pmatrix}
(-1)^{n-1}e_n&0&0&\cdots&0&0&0\\
(-1)^{n-2}e_{n-1}&(-1)^{n-1}e_n&0&\cdots&0&0&0\\
(-1)^{n-3}e_{n-2}&(-1)^{n-2}e_{n-1}&\sigma_0&\cdots&0&0&0\\
\vdots&\vdots&\vdots&\cdots&0&0&0\\
e_1&-e_2&e_3&\cdots&0&0&0\\
-1&e_1&-e_2&\cdots&0&0&0\\
0&-1&e_1&\cdots&0&0&0\\
\vdots&\vdots&\vdots&\cdots&\vdots&\vdots&\vdots\\
0&0&0&\cdots&e_1&-e_2&e_3\\
0&0&0&\cdots&-1&e_1&-e_2\\
0&0&0&\cdots&0&-1&e_1
\end{pmatrix}.\]
For ${\rm sgn}(M)$ one easily obtains that
\[{\rm sgn}(M)=(-1)^{\frac{n(n-1)}{2}+\sum_{i=1}^{n-1}k_i}.\]
Next, the corresponding matrix $Q$ is obtained  by deleting rows of $P$  indices of which are $k_1,k_2,\ldots,k_{n-1}.$
 Theorem \ref{th1} implies

 \begin{prop}\label{gvd} The following formula holds
\[ \det V(k_1,k_2,\ldots,k_n)={\rm sgn}(M)\cdot \det Q\cdot \det V(x_1,x_2,\ldots,x_n).\]
\end{prop}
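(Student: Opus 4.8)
The plan is to realize $V(k_1,\dots,k_n)$ as a specific $n$-th order minor of the $n\times(k_n)$ matrix $\overline V$ in (\ref{ma}) and then apply Theorem \ref{th1} directly. First I would observe that $\overline V$ has exactly $k_n$ columns $V_0,V_1,\dots,V_{k_n-1}$, where $V_\ell=(x_1^\ell,\dots,x_n^\ell)^T$, and that its first $n$ columns $V_0,\dots,V_{n-1}$ form the ordinary Vandermonde matrix $V(x_1,\dots,x_n)=A$. The recurrence (\ref{jjj3}) expresses each later column $V_{n+k-1}$ (for $k=1,\dots,k_n-n$) as the prescribed linear combination of the $n$ immediately preceding columns, which is precisely the homogeneous recurrence (\ref{r1h}) with coefficients $p_{i,j}$ read off from the displayed matrix $P$ (so that $r=k_n-n$ here, matching the $k_n\times(k_n+1-n)$ shape of $P$ once the extra $-1$/0 rows are appended). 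Thus $\overline V$ is the block matrix $A_r=[A\,|\,A_{n+1}|\cdots|A_{n+r}]$ of the theorem.

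Next I would identify which columns get deleted. The generalized Vandermonde matrix $V(k_1,\dots,k_n)$ is obtained from $\overline V$ by keeping exactly the columns indexed by the exponents $k_1-1,k_2-1,\dots,k_n-1$; equivalently, in the $1$-based column numbering of $A_r$ (where column $V_\ell$ is column $\ell+1$), we keep columns $k_1,\dots,k_n$ and delete all the others. Since $0<k_1<\cdots<k_n$ and the total number of columns is $k_n$, the deleted indices are the complement of $\{k_1,\dots,k_n\}$ in $\{1,\dots,k_n\}$, a set of size $k_n-n=r$; call them $j_1<\cdots<j_r$. Because $k_n$ is always kept, the last column is never deleted, so the hypothesis $j_r<r+n=k_n$ of Theorem \ref{th1} is satisfied. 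Then $M(\widehat{j_1},\dots,\widehat{j_r})=\det V(k_1,\dots,k_n)$ and Theorem \ref{th1} gives
\[
\det V(k_1,\dots,k_n)={\rm sgn}(M)\cdot\det Q\cdot\det A,
\]
with $\det A=\det V(x_1,\dots,x_n)$ and $Q=Q(j_1,\dots,j_r)$ the submatrix of $P$ on rows $j_1,\dots,j_r$. Here I would note that, since $P$ has its $(-1)$-entries on the subdiagonal of its last $r$ rows and those rows correspond to exponents $\geq n$, deleting the rows indexed $k_1,\dots,k_{n-1}$ from the full row set and keeping the rest is the same as picking the rows $j_1,\dots,j_r$; hence "$Q$ obtained by deleting rows $k_1,\dots,k_{n-1}$" in the statement agrees with $Q(j_1,\dots,j_r)$, up to the bookkeeping that row $k_n$ of $P$ does not exist (P has only $k_n$ rows but the last is row $k_n-1$ of $A_r$ after shifting), which I would reconcile carefully.

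The main obstacle is precisely this index bookkeeping: matching the $0$-based exponent indexing of $\overline V$'s columns with the $1$-based column indexing of $A_r$, checking that the complement set $\{j_1,\dots,j_r\}$ has the right size and that $j_r<k_n$, and verifying that the sign ${\rm sgn}(M)=(-1)^{nr+\sum j_i+\binom r2}$ collapses to the stated $(-1)^{\,n(n-1)/2+\sum_{i=1}^{n-1}k_i}$. For the sign I would use $\sum_{i=1}^r j_i=\binom{k_n+1}{2}-\sum_{i=1}^{n}k_i$ (sum over the complement) together with $r=k_n-n$, and simplify the exponent of $-1$ modulo $2$; the $\det Q$ factor then needs no further massaging since the proposition states the answer in terms of $\det Q$ itself. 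Once the indices and the sign are pinned down, the result is an immediate specialization of Theorem \ref{th1}, so I would keep the exposition short and devote the care to the combinatorial identities among $k_i$, $n$, and $r$.
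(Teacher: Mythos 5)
Your proposal is correct and follows essentially the same route as the paper: embed $V(k_1,\ldots,k_n)$ as the order-$n$ minor of $\overline V=A_r$ obtained by deleting the columns complementary to $\{k_1,\ldots,k_n\}$, use the recurrence (\ref{jjj3}) coming from (\ref{eovi}) to identify $P$, and invoke Theorem \ref{th1}. The extra care you devote to the column/row bookkeeping and to verifying the sign via $\sum j_i=\binom{k_n+1}{2}-\sum k_i$ is exactly the part the paper leaves implicit, and your reconciliation of ``delete rows $k_1,\ldots,k_{n-1}$'' with $Q(j_1,\ldots,j_r)$ is right, since $P$ has no row indexed $k_n$.
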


Consider the particular case $k_i=i,\;(i=1,\ldots,n-1),\;(k_n=m>n).$
We  have ${\rm sgn}(M)=1,$ and $Q$ is the upper-Hessenberg matrix of order $m-n$:
\[Q=\begin{pmatrix}
e_1&-e_2&\cdots&*&*\\
-1&e_1&\cdots&*&*\\
0&-1&\cdots&*&*\\
\vdots&\vdots&\cdots&\vdots&\vdots\\
0&0&\cdots&e_1&*\\
0&0&\cdots&-1&e_1\end{pmatrix},\]
where $*$, depending on $m,$ has to be replaced with  either  $0$ or some of $\sigma$'s.
\begin{prop}For $m>n$ we have
\[\begin{vmatrix}1&x_1&\cdots&x_1^{n-2}&x_1^{m-1}\\
1&x_2&\cdots&x_2^{n-2}&x_2^{m-1}\\
\vdots&\vdots&\cdots&\vdots&\vdots\\
1&x_n&\cdots&x_n^{n-2}&x_n^{m-1}\\
\end{vmatrix}=\begin{vmatrix}
e_1&-e_2&\cdots&*&*\\
-1&e_1&\cdots&*&*\\
\vdots&\vdots&\cdots&\vdots&\vdots\\
0&0&\cdots&e_1&*\\
0&0&\cdots&-1&e_1\end{vmatrix}\cdot
\begin{vmatrix}1&x_1&\cdots&x_1^{n-2}&x_1^{n-1}\\
1&x_2&\cdots&x_2^{n-2}&x_2^{n-1}\\
\vdots&\vdots&\cdots&\vdots&\vdots\\
1&x_n&\cdots&x_n^{n-2}&x_n^{n-1}\\
\end{vmatrix}
.\]

\end{prop}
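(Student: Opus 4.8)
The plan is to obtain the statement as the special case $k_i=i$ $(i=1,\ldots,n-1)$, $k_n=m$ of Proposition \ref{gvd}. Since $m>n$, the sequence $1<2<\cdots<(n-1)<m$ is strictly increasing, and reading definition (\ref{vg}) off for this choice, the columns of $V(k_1,\ldots,k_n)$ carry the exponents $0,1,\ldots,n-2,m-1$; hence $V(k_1,\ldots,k_n)$ is precisely the matrix on the left of the asserted identity. Proposition \ref{gvd} then gives
\[
\det V(k_1,\ldots,k_n)={\rm sgn}(M)\cdot\det Q\cdot\det V(x_1,\ldots,x_n),
\]
so all that remains is to compute ${\rm sgn}(M)$ and to exhibit $Q$ for this choice of the $k_i$.

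For the sign, I would substitute into the formula ${\rm sgn}(M)=(-1)^{\frac{n(n-1)}{2}+\sum_{i=1}^{n-1}k_i}$ obtained above: here $\sum_{i=1}^{n-1}k_i=1+2+\cdots+(n-1)=\frac{n(n-1)}{2}$, so the exponent is $n(n-1)$, an even number, and ${\rm sgn}(M)=1$. For $Q$, recall that $Q=Q(j_1,\ldots,j_r)$ is the submatrix in the rows of $P$ whose indices $j_1<\cdots<j_r$ are the columns of $\overline V$ that get deleted, i.e.\ the complement of $\{k_1,\ldots,k_n\}$; with the present choice this complement is $\{n,n+1,\ldots,m-1\}$, a set of size $r=m-n$, so $Q$ is the order-$(m-n)$ matrix formed by rows $n,n+1,\ldots,m-1$ of the explicit $P$ displayed before Proposition \ref{gvd}. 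Reading those rows off, each is a shift of the pattern $(\ldots,0,-1,e_1,-e_2,e_3,\ldots)$: the matrix $Q$ has $e_1$ on the diagonal, $-1$ on the subdiagonal, zeros below it, and in the upper triangle the band $-e_2,e_3,\ldots$ of the $\sigma_j=(-1)^{n-j}e_{n-j+1}$, cut off wherever the right edge of $Q$ or the zero entries of $P$ intervene. Those cut-off positions are exactly the entries marked $*$ in the statement, each equal to $0$ or to one of the $\sigma_j$ according to how $m$ compares with $2n$. This is precisely the upper-Hessenberg matrix written just before the proposition.

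Putting the pieces together, ${\rm sgn}(M)\cdot\det Q=\det Q$ with $Q$ that Hessenberg matrix, and substituting into the displayed consequence of Proposition \ref{gvd} gives the identity. I do not expect a genuine obstacle here: the real content was already carried by Theorem \ref{th1} and Proposition \ref{gvd}, and what is left is the index bookkeeping above --- identifying which rows of $P$ survive and checking that the surviving block is Hessenberg of the stated shape --- which is routine, if a little fiddly, transcription from the explicit form of $P$.
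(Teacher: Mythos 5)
Your proposal is correct and follows exactly the route the paper intends: specialize Proposition \ref{gvd} to $k_i=i$, $k_n=m$, note that $\sum_{i=1}^{n-1}k_i=\frac{n(n-1)}{2}$ makes the sign exponent $n(n-1)$ and hence ${\rm sgn}(M)=1$, and identify $Q$ as the order-$(m-n)$ Hessenberg block of $P$ in rows $n,\ldots,m-1$. The paper itself gives only this same specialization (with less detail on the row bookkeeping), so there is nothing further to compare.
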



\begin{thebibliography}{99}
\bibitem{jan}M, Janji´c, Determinants and Recurrence Sequences, J. Integer Sequences, 15 (2012),
Article 12.3.5.



\end{thebibliography}
\end{document}